%
%

\documentclass[10pt]{amsart}
\usepackage{amssymb,MnSymbol}
\usepackage{amsthm,amsmath}

\title[Symmetric approximations of pseudo-Boolean functions]{Symmetric approximations of pseudo-Boolean functions with applications to influence indexes}

\author{Jean-Luc Marichal}
\address{Mathematics Research Unit, FSTC, University of Luxembourg, 6, rue Coudenhove-Kalergi, L-1359 Luxembourg, Grand Duchy of Luxembourg.}
\email{jean-luc.marichal[at]uni.lu}

\author{Pierre Mathonet}
\address{Mathematics Research Unit, FSTC, University of Luxembourg, 6, rue Coudenhove-Kalergi, L-1359 Luxembourg, Grand Duchy of Luxembourg.}
\email{p.mathonet[at]ulg.ac.be}

\date{February 27, 2012}

\begin{document}

\theoremstyle{plain}
\newtheorem{theorem}{Theorem}
\newtheorem{lemma}[theorem]{Lemma}
\newtheorem{proposition}[theorem]{Proposition}
\newtheorem{corollary}[theorem]{Corollary}
\newtheorem{fact}[theorem]{Fact}
\newtheorem*{main}{Main Theorem}

\theoremstyle{definition}
\newtheorem{definition}[theorem]{Definition}
\newtheorem{example}[theorem]{Example}

\theoremstyle{remark}
\newtheorem*{conjecture}{\indent Conjecture}
\newtheorem{remark}{\indent Remark}
\newtheorem{claim}{Claim}

\newcommand{\N}{\mathbb{N}}
\newcommand{\R}{\mathbb{R}}
\newcommand{\I}{\mathbb{I}}
\newcommand{\B}{\mathbb{B}}
\newcommand{\Vspace}{\vspace{2ex}}
\newcommand{\bfx}{\mathbf{x}}
\newcommand{\bfy}{\mathbf{y}}
\newcommand{\bfh}{\mathbf{h}}
\newcommand{\bfe}{\mathbf{e}}
\newcommand{\Q}{Q}
\newcommand{\os}{\mathrm{os}}

\begin{abstract}
We introduce an index for measuring the influence of the $k$th smallest variable on a pseudo-Boolean function. This index is defined from a
weighted least squares approximation of the function by linear combinations of order statistic functions. We give explicit expressions for both
the index and the approximation and discuss some properties of the index. Finally, we show that this index subsumes the concept of system
signature in engineering reliability and that of cardinality index in decision making.
\end{abstract}

\keywords{Pseudo-Boolean function, least squares approximation, symmetric function, cooperative game theory, system reliability, system
signature, cardinality index.}

\subjclass[2010]{Primary 41A10, 93E24; Secondary 62G30, 90B25, 91A12}

\maketitle

\section{Introduction}

Boolean and pseudo-Boolean functions play a central role in various areas of applied mathematics such as cooperative game theory, engineering
reliability, and decision making (where fuzzy measures and fuzzy integrals are often used). In these areas indexes have been introduced to
measure the importance of a variable or its influence on the function under consideration (see, e.g., \cite{BouKahKalKatLin92,Mar00}). For
instance, the concept of importance of a player in a cooperative game has been studied in various papers on \emph{values} and \emph{power
indexes} starting from the pioneering works by Shapley~\cite{Sha53} and Banzhaf~\cite{Ban65}. These power indexes were rediscovered later in
system reliability theory as Barlow-Proschan and Birnbaum measures of importance (see, e.g., \cite{Ram90}).

In general there are many possible influence/importance indexes and they are rather simple and natural. For instance, a cooperative game on a
finite set $[n]=\{1,\ldots,n\}$ of \emph{players} is a set function $v\colon 2^{[n]}\to\R$ with $v(\varnothing)=0$, which associates with any
\emph{coalition} of players $S\subseteq [n]$ its \emph{worth} $v(S)$. The \emph{Banzhaf value} of player $i\in [n]$ in the game $v$ is then
defined as
\begin{equation}\label{eq:sa8ffd}
\phi_B(v,i) ~=~ \frac{1}{2^{n-1}}\sum_{S\subseteq [n]\setminus\{i\}}\big(v(S\cup\{i\})-v(S)\big) ~=~ \frac{1}{2^{n-1}}\sum_{S\ni
i}v(S)-\frac{1}{2^{n-1}}\sum_{S\not\ni i}v(S).
\end{equation}
Thus, $\phi_B(v,i)$ is the average of the marginal contributions of player $i$ to all coalitions $S\subseteq [n]\setminus\{i\}$, or the
difference between the average worth over all coalitions $S\ni i$ and the average worth over all coalitions $S\not\ni i$. Considering weighted
averages instead of symmetric averages gives rise to various \emph{probabilistic values} (see \cite{Web88}), including the Shapley value and
weighted Banzhaf values (see \cite{MarMat11a}).

The choice of a suitable influence/importance index depends on the practical problem under consideration and is usually made by considering the
properties that the index should satisfy. This is why many indexes have been characterized axiomatically. Besides these characterizations, it is well
known in statistics that one can measure the influence of a variable using linear regression. This approach was applied successfully to
pseudo-Boolean functions by Hammer and Holzman~\cite{HamHol92}, who showed that the Banzhaf value appears as the coefficients of the linear
terms of the standard least squares approximation of a game (or its corresponding pseudo-Boolean function) by a function of degree at most $1$.
Weighted versions of this least squares approach were also considered to characterize the Shapley value \cite{ChaGolKeaRou88} and weighted
Banzhaf values \cite{MarMat11a}.

Slightly different influence indexes emerged in certain applications where it is not the influence of a variable on a function that is to be
measured but rather the influence of adding a variable to a given subset of variables. For instance, considering a system made up of $n$
interconnected components with independent and identically distributed (i.i.d.) lifetimes, Samaniego \cite{Sam85,Sam07} defined the
\emph{signature} of the system as the $n$-tuple $(s_1,\ldots,s_n)$ where $s_k$ is the probability that the $k$th failure causes the system to
fail. Due to the i.i.d.\ assumption, the signature only depends on the (Boolean) structure function of the system. Thus, the number $s_k$ can be
interpreted as a measure of the influence on the structure function of adding a $k$th element to the set of failed components. Another example
of such a measure of influence was introduced by Yager~\cite{Yag02} in the context of fuzzy measures. Considering a fuzzy measure on an $n$-set
$X$, he introduced the \emph{cardinality index} as the $n$-tuple $(C_0,\ldots,C_{n-1})$, where $C_k$ is the average gain in certitude one gets
when going from a $k$-subset to a $(k+1)$-subset.

In this paper we show that the concepts of system signature and cardinality index are special instances of a more general notion of influence
index: the \emph{influence index of the $k$th smallest variable} on a pseudo-Boolean function. We define this index by considering the least
squares approximation of a given function by a linear combination of order statistic functions. Such linear combinations are particularly
suitable for encoding the influence that we want to measure and are exactly the symmetric (i.e., invariant under a permutation of the variables)
pseudo-Boolean functions (see Proposition~\ref{prop:asdf7}). Here we consider the general framework of \emph{arbitrarily weighted} least squares
approximations. In Section 2 we give explicit expressions for the approximation and discuss some of its properties. In Sections 3 and 4 we
introduce our influence index and show how it subsumes the concepts of system signature and cardinality index. We also show how this index can
be used in cooperative game theory to define a new influence index.

We employ the following notation throughout the paper. We denote by $\B$ the two-element set $\{0,1\}$. For any $\bfx\in\B^n$, we set
$|\bfx|=\sum_{i=1}^nx_i$. For any $S\subseteq [n]=\{1,\ldots,n\}$, we denote by $\mathbf{1}_S$ the $n$-tuple whose $i$th coordinate is $1$, if
$i\in S$, and $0$, otherwise (with the particular cases $\mathbf{0}=\mathbf{1}_{\varnothing}$ and $\mathbf{1}=\mathbf{1}_{[n]}$).

Through the usual identification of the elements of $\B^n$ with the subsets of $[n]$, a pseudo-Boolean function $f\colon\B^n\to\R$ can be
equivalently described by a set function $v_f\colon 2^{[n]}\to\R$. We simply write $v_f(S)=f(\mathbf{1}_S)$. To avoid cumbersome notation, we
henceforth use the same symbol to denote both a given pseudo-Boolean function and its underlying set function, thus writing $f\colon\B^n\to\R$
or $f\colon 2^{[n]}\to\R$ interchangeably.

Recall that if the $\B$-valued variables $x_1,\ldots,x_n$ are rearranged in ascending order of magnitude $x_{(1)}\leqslant\cdots\leqslant
x_{(n)}$, then $x_{(k)}$ is called the \emph{$k$th order statistic} and the function $\os_k\colon\B^n\to\B$, defined as $\os_k(\bfx)=x_{(k)}$,
is the \emph{$k$th order statistic function}. We then have $\os_k(\bfx)=1$, if $\sum_{i=1}^nx_i\geqslant n-k+1$, and $0$, otherwise. As a matter
of convenience, we also formally define $\os_0\equiv 0$ and $\os_{n+1}\equiv 1$. An \emph{$L$-statistic} function is a linear combination of the
functions $\os_1,\ldots,\os_n$ while a \emph{shifted $L$-statistic} function is a linear combination of the functions $\os_1,\ldots,\os_{n+1}$.

\section{Symmetric approximations}

In this section we present and solve the problem of approximation of pseudo-Boolean functions by shifted $L$-statistic functions and discuss a few
properties of the approximations.

Recall that any $n$-ary pseudo-Boolean function $f$ can always be represented by a multilinear polynomial of degree at most $n$ (see
\cite{HamRud68}). More precisely, $f$ can always be written in the form
\begin{equation}\label{eq:pBfPF}
f(\bfx)=\sum_{S\subseteq [n]} f(S)\,\prod_{i\in S}x_i\,\prod_{i\in [n]\setminus S}(1-x_i).
\end{equation}
By expanding the second product, we see that this polynomial can be further simplified into
$$
f(\bfx)=\sum_{S\subseteq [n]} a_f(S)\,\prod_{i\in S}x_i\, ,
$$
where the set function $a_f\colon 2^{[n]}\to\R$, called the \emph{M\"obius transform} of $f$, is defined by
$$
a_f(S)=\sum_{T\subseteq S} (-1)^{|S|-|T|}\, f(T).
$$

Denote by $F(\B^n)$ the vector space of $n$-ary pseudo-Boolean functions and by $F_S(\B^n)$ the subspace of symmetric $n$-ary pseudo-Boolean
functions. It is clear that a function $f\in F(\B^n)$ is symmetric if and only if it is \emph{cardinality-based}, i.e., it satisfies the
property $f(S)=f(T)$ for every $S,T\subseteq [n]$ such that $|S|=|T|$. Equivalently, there exists a unique function
$\overline{f}\colon\{0,1,\ldots,n\}\to\R$ such that $f(S)=\overline{f}(|S|)$.

The following proposition shows that the shifted $L$-statistic functions are precisely those pseudo-Boolean functions that are symmetric.

\begin{proposition}\label{prop:asdf7}
A pseudo-Boolean function is symmetric if and only if it is a shifted $L$-statistic function.
\end{proposition}

\begin{proof}
The class of $n$-ary shifted $L$-statistic functions is clearly a subspace of $F_S(\B^n)$. Since each of these spaces has dimension $n+1$, they
must coincide.
\end{proof}

Given a weight function $w\colon\B^n\to {\left]{0},{\infty}\right[}$ and a function $f\in F(\B^n)$, we define the \emph{best symmetric
approximation of $f$ with respect to $w$} as the unique function $f_L\in F_S(\B^n)$ that minimizes the weighted squared distance
$$
\|f-g\|^2=\sum_{\bfx\in\B^n}w(\bfx)\big(f(\bfx)-g(\bfx)\big)^2
$$
among all symmetric functions $g\in F_S(\B^n)$. Since $\|\cdot\|$ is the norm associated with the inner product $$\langle
f,g\rangle=\sum_{\bfx\in\B^n}w(\bfx)f(\bfx)g(\bfx),$$ the solution $f_L$ of this approximation problem exists and is uniquely determined by the
orthogonal projection of $f$ onto $F_S(\B^n)$. We then write $f_L=A(f)$.

We will henceforth assume (without loss of generality) that the weights are multiplicatively normalized so that $\sum_{\bfx\in\B^n}w(\bfx)=1$.
Although this assumption is not necessary for most of the results, it will enable us to interpret $w$ as a probability distribution and make use
of certain concepts in probability theory.


\begin{definition}\label{def:vf}
For every $f\in F(\B^n)$, define $\overline{f}\colon \{0,1,\ldots,n\}\to\R$ as
\begin{equation}\label{eq:76dfsd}
\overline{f}(s) ~=~ E(f(\bfx)\mid |\bfx|=s) ~=~ \frac{\sum_{|\bfx|=s}w(\bfx)\, f(\bfx)}{\sum_{|\bfx|=s}w(\bfx)}\, .\footnote{Although
$\overline{f}$ depends explicitly on $w$, we use this notation for it is consistent with that introduced for cardinality-based set functions.
Note also that, in the special case when the weight function $w$ is symmetric, $\overline{f}(s)$ clearly reduces to ${n\choose
s}^{-1}\sum_{|\bfx|=s}f(\bfx)$.}
\end{equation}
We also formally define $\overline{f}({-1})=0$.
\end{definition}

The next theorem gives an explicit expression for $A(f)$.

\begin{theorem}\label{thm:as897}
The best symmetric approximation of $f\in F(\B^n)$ is given by
\begin{equation}\label{eq:98s7}
A(f)=\sum_{j=1}^{n+1}c_j\,\mathrm{os}_j\, ,
\end{equation}
where $c_j = \overline{f}(n-j+1)-\overline{f}(n-j)$ for every $j\in [n+1]$.
\end{theorem}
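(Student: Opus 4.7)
The plan is to avoid working directly in the basis $\{\os_1,\ldots,\os_{n+1}\}$ (whose Gram matrix need not be diagonal under an arbitrary weighted inner product) and instead to use the level-set indicators $e_s\colon\B^n\to\R$ defined by $e_s(\bfx)=1$ if $|\bfx|=s$ and $e_s(\bfx)=0$ otherwise, for $s=0,1,\ldots,n$. Each $e_s$ is clearly symmetric, so $e_s\in F_S(\B^n)$. There are $n+1$ of them, and, having pairwise disjoint supports, they are orthogonal with respect to \emph{every} weighted inner product $\langle f,g\rangle=\sum_{\bfx\in\B^n}w(\bfx)f(\bfx)g(\bfx)$, regardless of $w$. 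Since $\dim F_S(\B^n)=n+1$ by Proposition~\ref{prop:asdf7}, the family $\{e_0,\ldots,e_n\}$ is an orthogonal basis of $F_S(\B^n)$.

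With this basis in hand, the orthogonal projection of $f$ onto $F_S(\B^n)$ reads
$$A(f)=\sum_{s=0}^n\frac{\langle f,e_s\rangle}{\langle e_s,e_s\rangle}\,e_s=\sum_{s=0}^n\overline{v}_f(s)\,e_s,$$
because
$$\frac{\langle f,e_s\rangle}{\langle e_s,e_s\rangle}=\frac{\sum_{|\bfx|=s}w(\bfx)\,f(\bfx)}{\sum_{|\bfx|=s}w(\bfx)}=E(f(\bfx)\mid|\bfx|=s)=\overline{v}_f(s).$$

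To recover the announced formula, I would use the identity $e_s=\os_{n-s+1}-\os_{n-s}$ for $s=0,1,\ldots,n$, which follows at once from $\os_k(\bfx)=1\Leftrightarrow|\bfx|\geqslant n-k+1$ together with the conventions $\os_0\equiv 0$ and $\os_{n+1}\equiv 1$ (these conventions are exactly what make the identity valid at $s=n$ and $s=0$, respectively). Substituting into $A(f)=\sum_s\overline{v}_f(s)\,e_s$, reindexing the two resulting sums by $j=n-s+1$ and $j=n-s$, and collecting like $\os_j$-terms yields
$$A(f)=\overline{v}_f(0)\,\os_{n+1}+\sum_{j=1}^n\bigl(\overline{v}_f(n-j+1)-\overline{v}_f(n-j)\bigr)\,\os_j,$$
which, together with the trivial equality $\overline{v}_f(0)=f(\mathbf{0})$ (since $|\bfx|=0$ forces $\bfx=\mathbf{0}$), is exactly the claim.

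I do not anticipate a serious obstacle. The conceptual content is the single observation that the partition of $\B^n$ into cardinality levels makes the $e_s$'s automatically orthogonal under \emph{any} weight, which collapses the projection computation to a one-line conditional-expectation formula. The rest is a routine change of basis driven by the elementary telescoping identity $e_s=\os_{n-s+1}-\os_{n-s}$.
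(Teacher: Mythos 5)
Your proof is correct, but it is organized differently from the paper's. The paper works entirely in the basis $\{\os_1,\ldots,\os_{n+1}\}$: it writes down the normal equations $\langle f-A(f),\os_i\rangle=0$, observes that the resulting cumulative conditions over $\{|\bfx|\geqslant n-i+1\}$ can be differenced into conditions over the single level sets $\{|\bfx|=n-i+1\}$, and then solves the triangular system for the $c_j$ by subtracting consecutive equations. You instead change basis first, to the level-set indicators $e_s$, exploit the fact that disjoint supports force orthogonality under \emph{every} weight $w>0$, read off the projection coefficients as conditional expectations $\overline{v}_f(s)$, and only then convert back to the $\os_j$ via the telescoping identity $e_s=\os_{n-s+1}-\os_{n-s}$ (the same identity the paper uses in the proof of Proposition~\ref{prop:asdf7}). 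The two arguments rest on the same structural fact --- that the cardinality levels partition $\B^n$, so the symmetric approximation on each level is just the weighted average there --- but your version makes that fact explicit and isolates it as the reason the result is weight-independent in form; it also produces the intermediate expression $A(f)=\sum_s\overline{v}_f(s)\,e_s$, which is exactly the first formula of the paper's subsequent corollary (Eq.~(\ref{eq:sdf789b})), so your route derives theorem and corollary in one pass. The paper's version is slightly more economical if one only wants the $c_j$. All the individual steps you outline (orthogonality of the $e_s$, the dimension count giving a basis, the reindexing $j=n-s+1$ and $j=n-s$ with $\os_0\equiv 0$ absorbing the stray term, and $\overline{v}_f(0)=f(\mathbf{0})$) check out.
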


\begin{proof}
Since $F_S(\B^n)$ is spanned by the $n+1$ functions $\mathrm{os}_1,\ldots,\mathrm{os}_n,\mathrm{os}_{n+1}$, the projection $A(f)$ is
characterized by the conditions
\begin{equation}\label{eq:9s0df2}
\langle f-A(f),\mathrm{os}_i\rangle=0 \qquad (i\in [n+1]),
\end{equation}
that is,
\begin{equation}\label{eq:9s0df}
\sum_{|\bfx|\geqslant n-i+1}w(\bfx)\big(f(\bfx)-A(f)(\bfx)\big)=0 \qquad (i\in [n+1]).
\end{equation}
We observe that the system (\ref{eq:9s0df}) remains equivalent if we replace the inequality $|\bfx|\geqslant n-i+1$ with the equality. Using
(\ref{eq:98s7}), we then obtain
\begin{equation}\label{eq:98s7a}
\bigg(\sum_{j=i}^{n+1}c_j\bigg)\bigg(\sum_{|\bfx|= n-i+1}w(\bfx)\bigg)=\sum_{|\bfx|= n-i+1}w(\bfx)f(\bfx) \qquad (i\in [n+1])
\end{equation}
We finally obtain the result by using (\ref{eq:76dfsd}) and subtracting equation $i+1$ from equation $i$.
\end{proof}

We now provide alternative expressions for $A(f)$ as a shifted $L$-statistic function and symmetric multilinear polynomials. Observing first
that $\overline{f}(0)=f(\mathbf{0})$ and then using (\ref{eq:98s7}) and (\ref{eq:9s0df2}) for $i=n+1$, we obtain
\begin{equation}\label{eq:sdf789a}
A(f) ~=~ f(\mathbf{0})+\sum_{j=1}^n c_j\, \mathrm{os}_j ~=~ \langle f,1\rangle + \sum_{j=1}^n c_j
\big(\mathrm{os}_j-\langle\mathrm{os}_j,1\rangle\big),
\end{equation}
where $c_j = \overline{f}(n-j+1)-\overline{f}(n-j)$ for every $j\in [n]$. Then, using (\ref{eq:76dfsd}), (\ref{eq:98s7}), and (\ref{eq:98s7a}),
we obtain
\begin{equation}\label{eq:useful}
A(f)(S) ~=~ \sum_{j=n-|S|+1}^{n+1}c_j ~=~ E(f(\bfx)\mid |\bfx|=|S|) ~=~ \overline{f}(|S|)\qquad (S\subseteq [n])
\end{equation}
so that by (\ref{eq:pBfPF}) we obtain immediately
$$
A(f)(\bfx) ~=~ \sum_{S\subseteq [n]} \overline{f}(|S|)\,\prod_{i\in S}x_i\,\prod_{i\in [n]\setminus S}(1-x_i) ~=~ \sum_{S\subseteq [n]}
\Delta_k^{|S|}\, \overline{f}(k)|_{k=0}\,\prod_{i\in S}x_i\, ,
$$
where $\Delta_k^{s}\, \overline{f}(k)|_{k=0}=\sum_{t=0}^s{s\choose t}(-1)^{s-t}\,\overline{f}(t)$.

We now examine the effect of a permutation of the variables of $f$ on the symmetric approximation $A(f)$. Let $S_n$ denote the symmetric group
on $[n]$. A permutation $\pi\in S_n$ acts on a pseudo-Boolean function $f\in F(\B^n)$ by
$\pi(f)(x_1,\ldots,x_n)=f(x_{\pi(1)},\ldots,x_{\pi(n)})$. A permutation $\pi\in S_n$ is said to be a \emph{symmetry} of $f\in F(\B^n)$ if
$\pi(f)=f$.

\begin{proposition}\label{prop:sdg79}
If $\pi\in S_n$ is a symmetry of the weight function $w$, then for every $f\in F(\B^n)$ we have $A(\pi(f))=A(f)$ and
$\|\pi(f)-A(f)\|=\|f-A(f)\|$.
\end{proposition}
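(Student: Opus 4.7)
The plan rests on two preliminary observations. First, the hypothesis that $\pi$ is a symmetry of $w$ reads $w(x_1,\ldots,x_n)=w(x_{\pi(1)},\ldots,x_{\pi(n)})$ for all $\bfx\in\B^n$. A one-line change of variables $\bfy=\pi(\bfx)$ in the defining sum $\langle f,g\rangle=\sum_{\bfx}w(\bfx)f(\bfx)g(\bfx)$ then shows that the inner product is preserved by the $\pi$-action, i.e.\ $\langle\pi(f),\pi(g)\rangle=\langle f,g\rangle$ for all $f,g\in F(\B^n)$, and in particular $\|\pi(h)\|=\|h\|$ for every $h$. Second, every symmetric pseudo-Boolean function is fixed by every permutation, so $\pi(g)=g$ for all $g\in F_S(\B^n)$; in particular $\pi(A(f))=A(f)$.

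The equality of norms is then immediate: inserting $\pi(A(f))=A(f)$ and applying the isometry property with $h=f-A(f)$ yields
$$
\|\pi(f)-A(f)\|=\|\pi(f)-\pi(A(f))\|=\|\pi(f-A(f))\|=\|f-A(f)\|.
$$

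For the first equality $A(\pi(f))=A(f)$, I would characterize $A(\pi(f))$ as the unique element of $F_S(\B^n)$ satisfying $\langle\pi(f)-A(\pi(f)),g\rangle=0$ for every $g\in F_S(\B^n)$, and check that $A(f)$ meets this condition. Since $A(f)\in F_S(\B^n)$, it suffices to verify orthogonality. For any $g\in F_S(\B^n)$, using successively $\pi(g)=g$, the $\pi$-invariance of the inner product, and the projection identity $\langle f-A(f),g\rangle=0$,
$$
\langle \pi(f)-A(f),g\rangle=\langle\pi(f),\pi(g)\rangle-\langle A(f),g\rangle=\langle f,g\rangle-\langle A(f),g\rangle=\langle f-A(f),g\rangle=0.
$$
Uniqueness of the orthogonal projection then gives $A(\pi(f))=A(f)$.

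There is no real obstacle: the proposition is the standard fact that the orthogonal projection onto a subspace commutes with any isometry of the ambient space that fixes that subspace pointwise, specialized to the present inner product and to the subspace $F_S(\B^n)$. The only point requiring even a moment's care is deriving the $\pi$-invariance of $\langle\cdot,\cdot\rangle$ from the $\pi$-invariance of $w$, which is a routine change of summation variable.
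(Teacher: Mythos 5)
Your proposal is correct and follows essentially the same route as the paper: both establish that a symmetry of $w$ induces an isometry of $F(\B^n)$, then use the $\pi$-invariance of the inner product together with $\pi(g)=g$ for symmetric $g$ to verify the orthogonality characterization of the projection and the equality of norms. Your writing out of $\pi(A(f))=A(f)$ to factor $\pi$ out of $\|\pi(f)-A(f)\|$ is a slightly more explicit version of the paper's ``similar argument,'' but the substance is identical.
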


\begin{proof}
If $\pi$ is a symmetry of $w$, then clearly it is an isometry of $F(\B^n)$, that is, $\langle\pi(f),\pi(g)\rangle=\langle f,g\rangle$. Now, if
$g\in F_S(\B^n)$, then by (\ref{eq:9s0df2}), we have $\langle\pi(f),g\rangle=\langle\pi(f),\pi(g)\rangle=\langle f,g\rangle=\langle
A(f),g\rangle$, which shows that $A(\pi(f))=A(f)$. We prove the second equality similarly since $\|\pi(f)-A(f)\|^2=\|\pi(f)-\pi(A(f))\|^2$.
\end{proof}

With any pseudo-Boolean function $f\in F(\B^n)$, we can associate the symmetric function $ \mathrm{Sym}(f)=\frac{1}{n!}\,\sum_{\pi\in
S_n}\pi(f). $ We then have the following result.

\begin{corollary}\label{cor:sdg79}
If the weight function $w$ is symmetric, then for every $f\in F(\B^n)$ we have $\mathrm{Sym}(f)=A(\mathrm{Sym}(f))=A(f)$.
\end{corollary}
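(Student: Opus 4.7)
The plan is to split the double equality into two parts, each of which reduces to a short argument using already-established facts.

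First I would handle the equality $\mathrm{Sym}(f)=A(\mathrm{Sym}(f))$. The function $\mathrm{Sym}(f)=\frac{1}{n!}\sum_{\pi\in S_n}\pi(f)$ is, by construction, invariant under the action of every $\pi\in S_n$, hence symmetric. So $\mathrm{Sym}(f)\in F_S(\B^n)$, and since $A$ is the orthogonal projection onto $F_S(\B^n)$, it fixes every element of $F_S(\B^n)$. This immediately gives $A(\mathrm{Sym}(f))=\mathrm{Sym}(f)$.

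Next I would establish $A(\mathrm{Sym}(f))=A(f)$ by invoking Proposition~\ref{prop:sdg79}. Since $w$ is symmetric, every $\pi\in S_n$ is a symmetry of $w$, so $A(\pi(f))=A(f)$ for all $\pi$. The operator $A$, being an orthogonal projection onto a linear subspace, is itself linear, so
$$
A(\mathrm{Sym}(f))=A\bigg(\frac{1}{n!}\sum_{\pi\in S_n}\pi(f)\bigg)=\frac{1}{n!}\sum_{\pi\in S_n}A(\pi(f))=\frac{1}{n!}\sum_{\pi\in S_n}A(f)=A(f).
$$
Combining the two steps yields the chain of equalities in the statement.

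There is no real obstacle here; the argument is essentially a bookkeeping exercise. The only point worth stating explicitly, and which I would make sure to mention, is the linearity of $A$, which is immediate from its characterization as an orthogonal projection onto the linear subspace $F_S(\B^n)$.
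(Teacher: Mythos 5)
Your proof is correct and follows exactly the paper's argument: the first equality is the observation that $\mathrm{Sym}(f)$ is symmetric and hence fixed by the projector $A$, and the second combines Proposition~\ref{prop:sdg79} with the linearity of $A$. You have merely written out the details that the paper leaves implicit.
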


\begin{proof}
The first equality follows from the symmetry of $\mathrm{Sym}(f)$. The second one follows from Proposition~\ref{prop:sdg79} and the linearity of
the projector $A$.
\end{proof}

We end this section by analyzing the effect of dualization of $f$ on the symmetric approximation $A(f)$. The \emph{dual} of a function $f\in
F(\B^n)$ is the function $f^d\in F(\B^n)$ defined by $f^d(\bfx)=f(\mathbf{0})+f(\mathbf{1})-f(\mathbf{1}_{[n]}-\bfx)$.

\begin{proposition}\label{prop:adsf711}
If the weight function $w$ satisfies $w(\mathbf{1}_{[n]}-\bfx)=w(\bfx)$ for all $\bfx\in\B^n$, then for every $f\in F(\B^n)$ we have
$A(f^d)=A(f)^d$.
\end{proposition}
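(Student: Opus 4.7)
The plan is to unpack the definition of $\overline{v}_{f^d}(s)$ as a conditional expectation and perform a change of variables $\bfy=\mathbf{1}_{[n]}-\bfx$, exploiting the hypothesis $w(\mathbf{1}_{[n]}-\bfx)=w(\bfx)$ to show that this involution preserves the conditional distribution on level sets.

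More precisely, I would first write
$$
\overline{v}_{f^d}(s)=E\bigl(f^d(\bfx)\mid |\bfx|=s\bigr)=E\bigl(1-f(\mathbf{1}_{[n]}-\bfx)\mid |\bfx|=s\bigr)
$$
and split this by linearity of expectation into $1-E(f(\mathbf{1}_{[n]}-\bfx)\mid |\bfx|=s)$. Next, I would perform the substitution $\bfy=\mathbf{1}_{[n]}-\bfx$, noting that $|\bfy|=n-|\bfx|$, so the condition $|\bfx|=s$ becomes $|\bfy|=n-s$. By the assumption on $w$, the map $\bfx\mapsto\mathbf{1}_{[n]}-\bfx$ is measure-preserving, so
$$
E\bigl(f(\mathbf{1}_{[n]}-\bfx)\mid |\bfx|=s\bigr)=E\bigl(f(\bfy)\mid |\bfy|=n-s\bigr)=\overline{v}_f(n-s).
$$
Combining with the previous display yields the desired formula.

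If one prefers to avoid probabilistic language, the argument can equivalently be carried out at the level of finite sums: write
$$
\overline{v}_{f^d}(s)=\frac{\sum_{|\bfx|=s}w(\bfx)f^d(\bfx)}{\sum_{|\bfx|=s}w(\bfx)},
$$
substitute $f^d(\bfx)=1-f(\mathbf{1}_{[n]}-\bfx)$, and reindex the sums via $\bfy=\mathbf{1}_{[n]}-\bfx$, using the hypothesis on $w$ to see that both the numerator and the denominator transform into the corresponding sums over $|\bfy|=n-s$.

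There is no serious obstacle here; the only delicate point is to be explicit about the change of variables and to verify that the weighted sums in the numerator and denominator of the conditional expectation transform consistently under complementation, which is exactly guaranteed by the symmetry assumption $w(\mathbf{1}_{[n]}-\bfx)=w(\bfx)$.
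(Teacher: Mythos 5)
Your proof is correct and is exactly the computation the paper has in mind (the paper states the result is ``straightforward'' and omits the argument): unpack $\overline{v}_{f^d}(s)$ as a weighted average over $|\bfx|=s$, substitute $f^d(\bfx)=1-f(\mathbf{1}_{[n]}-\bfx)$, and reindex by the complementation $\bfy=\mathbf{1}_{[n]}-\bfx$, using $w(\mathbf{1}_{[n]}-\bfx)=w(\bfx)$ to match both numerator and denominator with the corresponding sums over $|\bfy|=n-s$. No gaps.
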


\begin{proof}
By (\ref{eq:9s0df2}) and (\ref{eq:useful}), we have $A(f)(\mathbf{1})-f(\mathbf{1})=A(f)(\mathbf{0})-f(\mathbf{0})=\langle A(f),1\rangle
-\langle f,1\rangle=0$. From these equalities, we obtain
$$
\langle f^d-A(f)^d,g^d\rangle=\langle (f-A(f))^d,g^d\rangle=\langle f-A(f),g\rangle=0
$$
for every function $g\in F_S(\B^n)$. The result then follows.
\end{proof}

\section{Influence of the $k$th smallest variable}

Following Hammer and Holzman's approach \cite{HamHol92}, to measure the influence of the $k$th smallest variable $x_{(k)}$ on a pseudo-Boolean
function $f\in F(\B^n)$, it is natural to define an index $I\colon F(\B^n)\times [n]\to\R$ as $I(f,k)=c_k$, where $c_k$ is defined in
Theorem~\ref{thm:as897}.\footnote{We observe that, by definition, this index remains invariant under normalization of $w$.}

\begin{definition}\label{de:dfdgf78}
Let $I\colon F(\B^n)\times [n]\to\R$ be defined as $I(f,k)=\overline{f}(n-k+1)-\overline{f}(n-k)$.
\end{definition}

Thus we have defined an influence index from an elementary approximation (projection) problem. Conversely, the following result shows that
$A(f)$ is the unique function of $F_S(\B^n)$ that preserves the average value and the influence index of $f$.

\begin{proposition}
A function $g\in F_S(\B^n)$ is the best symmetric approximation of $f\in F(\B^n)$ if and only if $\langle f,1\rangle=\langle g,1\rangle$ and
$I(f,k)=I(g,k)$ for all $k\in [n]$.
\end{proposition}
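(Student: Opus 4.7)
The plan is to prove the equivalence by establishing each implication separately, relying on the explicit shifted $L$-statistic formula (\ref{eq:sdf789a}) together with the identity $\overline{v}_{A(f)}(s)=\overline{v}_f(s)$ that is already derived inside the proof of the preceding corollary.

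For the forward implication, assume $g=A(f)$. Specializing the orthogonality relation (\ref{eq:9s0df2}) to $i=n+1$ and recalling that $\mathrm{os}_{n+1}\equiv 1$, I immediately obtain $\langle f,1\rangle=\langle A(f),1\rangle=\langle g,1\rangle$. The equality of the influence indices then follows at once from $\overline{v}_{A(f)}(s)=\overline{v}_f(s)$: for every $k\in[n]$,
$$
I(g,k)=\overline{v}_{A(f)}(n-k+1)-\overline{v}_{A(f)}(n-k)=\overline{v}_f(n-k+1)-\overline{v}_f(n-k)=I(f,k).
$$

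For the converse, assume $g\in F_S(\B^n)$ satisfies $\langle f,1\rangle=\langle g,1\rangle$ and $I(f,k)=I(g,k)$ for every $k\in[n]$. Since $g$ is already symmetric, uniqueness of the orthogonal projection forces $A(g)=g$, so applying (\ref{eq:sdf789a}) to $g$ gives
$$
g=\langle g,1\rangle+\sum_{j=1}^n I(g,j)\,\big(\mathrm{os}_j-\langle\mathrm{os}_j,1\rangle\big).
$$
Substituting $\langle g,1\rangle=\langle f,1\rangle$ and $I(g,j)=I(f,j)$ for $j\in[n]$, and comparing with the expression (\ref{eq:sdf789a}) for $A(f)$, I conclude that $g=A(f)$.

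I do not anticipate any real obstacle: the argument is essentially bookkeeping based on (\ref{eq:sdf789a}). The one conceptual point worth emphasizing is a matching of dimensions. The hypothesis imposes $n$ scalar constraints through the values $I(f,k)$ together with the single constraint $\langle f,1\rangle=\langle g,1\rangle$, for a total of $n+1$ linear conditions, which is exactly $\dim F_S(\B^n)$. Thus the constraints are just numerous enough to pin down $g$ as the orthogonal projection $A(f)$.
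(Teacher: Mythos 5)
Your proof is correct and follows essentially the same route as the paper: the necessity via Eq.~(\ref{eq:9s0df2}) at $i=n+1$ together with $\overline{v}_{A(f)}=\overline{v}_f$, and the sufficiency by expanding the symmetric function $g$ as a shifted $L$-statistic whose coefficients are its influence indices and matching against (\ref{eq:sdf789a}). The only cosmetic difference is that the paper writes $g=g(\mathbf{0})+\sum_j I(g,j)\,\mathrm{os}_j$ and then solves for $g(\mathbf{0})$ from $\langle g,1\rangle=\langle f,1\rangle$, whereas you invoke (\ref{eq:sdf789a}) directly for $g$ via $A(g)=g$; both are the same computation.
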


\begin{proof}
The necessity is trivial (use Eq.~(\ref{eq:9s0df2}) for $i=n+1$). To prove the sufficiency, observe that any $g\in F_S(\B^n)$ satisfying the
assumptions of the proposition is of the form
$$
g ~=~ g(\mathbf{0})+\sum_{j=1}^n I(g,j)\,\mathrm{os}_j ~=~ g(\mathbf{0})+\sum_{j=1}^n I(f,j)\,\mathrm{os}_j\, .
$$
We then have $g(\mathbf{0})+\sum_{j=1}^n I(f,j)\,\langle\mathrm{os}_j,1\rangle=\langle g,1\rangle=\langle f,1\rangle$. Using (\ref{eq:sdf789a}),
we finally obtain $g=A(f)$.
\end{proof}

The next proposition reassembles several properties of the index $I(f,k)$. These properties follow easily from the definition of the index and
the properties of the approximations.

\begin{proposition}
Let $k\in [n]$ and let $w\colon\B^n\to {\left]{0},{\infty}\right[}$ be a weight function. Then
\begin{enumerate}
\item[$(i)$] The map $f\mapsto I(f,k)$ is linear.

\item[$(ii)$] If $\pi$ is a symmetry of $w$, then $I(\pi(f),k)=I(f,k)$ for every $f\in F(\B^n)$.

\item[$(iii)$] If $w$ is symmetric, then $I(f,k)=I(\mathrm{Sym}(f),k)$ for every $f\in F(\B^n)$.

\item[$(iv)$] If $w$ satisfies $w(\mathbf{1}_{[n]}-\bfx)=w(\bfx)$ for all $\bfx\in\B^n$, then $I(f^d,k)=I(f,n-k+1)$ for every $f\in F(\B^n)$.

\item[$(v)$] We have $\sum_{j=1}^n I(f,j)=f(\mathbf{1})-f(\mathbf{0})$.
\end{enumerate}
\end{proposition}

It is a well-known fact of linear algebra that a linear map on a finite dimensional inner product space can be expressed as an inner product
with a fixed vector. The next proposition gives the explicit form of such a vector for $I({\,\boldsymbol{\cdot}\,},k)$. To this extent, for
every $k\in [n]$ we introduce the function $g_k\colon\B^n\to\R$ as $g_k(\bfx)=\Delta_k(d_k\Delta_k\mathrm{os}_{k-1})$, where
$d_k=-1/\sum_{|x|=n-k+1}w(\bfx)$.

\begin{proposition}\label{prop:sd78dsad}
For every $f\in F(\B^n)$ and every $k\in [n]$, we have $I(f,k)=\langle f,g_k\rangle$.
\end{proposition}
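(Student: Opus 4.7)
The plan is to unwind the notation defining $g_k$, identify $g_k$ explicitly as a piecewise-constant function whose level sets are $\{|\bfx|=n-k\}$ and $\{|\bfx|=n-k+1\}$, and then recognize the inner product $\langle f,g_k\rangle$ as exactly the difference of the two conditional expectations entering the definition of $I(f,k)$. Since $I(\cdot,k)$ has already been shown to be linear (Proposition~\ref{prop:asd7fd}), the Riesz-style representing element, if it exists, is unique, so it suffices to verify the identity on an arbitrary $f$.

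First I would evaluate the inner difference. For any $k$, $\Delta_k\mathrm{os}_{k-1}=\mathrm{os}_k-\mathrm{os}_{k-1}$, and directly from the definition of the order-statistic functions $\mathrm{os}_k(\bfx)=\mathrm{os}_{k-1}(\bfx)+\mathbf{1}_{\{|\bfx|=n-k+1\}}$, so $\Delta_k\mathrm{os}_{k-1}$ is the indicator of the level $\{|\bfx|=n-k+1\}$. Multiplying by $d_k$ gives a function of $k$ and $\bfx$, to which I apply the outer $\Delta_k$, obtaining
\begin{equation*}
g_k(\bfx)=d_{k+1}\,\mathbf{1}_{\{|\bfx|=n-k\}}-d_k\,\mathbf{1}_{\{|\bfx|=n-k+1\}}.
\end{equation*}
Here $d_{k+1}=-1/\sum_{|\bfy|=n-k}w(\bfy)$, which is well defined for $k\in[n]$ because $w>0$ on $\B^n$ (the boundary cases $k=n$ and the use of $\mathrm{os}_0\equiv 0$, $\mathrm{os}_{n+1}\equiv 1$ cause no issue).

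Next I would substitute this expression into $\langle f,g_k\rangle=\sum_{\bfx}w(\bfx)f(\bfx)g_k(\bfx)$. The two indicators split the sum into two blocks; on each block the factor $d_{k+1}$ (respectively $d_k$) is constant and exactly cancels the sum of weights on the corresponding level, leaving
\begin{equation*}
\langle f,g_k\rangle=\frac{\sum_{|\bfx|=n-k+1}w(\bfx)f(\bfx)}{\sum_{|\bfy|=n-k+1}w(\bfy)}-\frac{\sum_{|\bfx|=n-k}w(\bfx)f(\bfx)}{\sum_{|\bfy|=n-k}w(\bfy)}=\overline{v}_f(n-k+1)-\overline{v}_f(n-k),
\end{equation*}
which equals $I(f,k)$ by Definition~\ref{de:dfdgf78}.

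There is no real obstacle; the only delicate point is parsing the notation $\Delta_k(d_k\Delta_k\mathrm{os}_{k-1})$, where the same symbol $\Delta_k$ occurs twice but applies to distinct functions of the index $k$. Once this is correctly interpreted as iterated forward differences in the integer parameter $k$, the remainder of the argument is a short bookkeeping computation.
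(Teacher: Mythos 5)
Your proof is correct and follows essentially the same route as the paper's: both unwind $\Delta_k(d_k\Delta_k\mathrm{os}_{k-1})$ into the two scaled level-set indicators on $\{|\bfx|=n-k\}$ and $\{|\bfx|=n-k+1\}$ and observe that the constants $d_{k+1}$, $d_k$ normalize the weighted sums into the conditional expectations $\overline{v}_f(n-k)$ and $\overline{v}_f(n-k+1)$. Yours is simply a more explicit write-up of the one-line computation the paper gives.
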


\begin{proof}
We have $d_{k+1}\langle f,\Delta_k\mathrm{os}_{k}\rangle=d_{k+1}\sum_{|\bfx|=n-k}w(\bfx)f(\bfx)=-\overline{f}(n-k)$, which leads immediately to
the result.
\end{proof}

Proposition~\ref{prop:sd78dsad} shows that the index $I(f,k)$ is the covariance of the random variables $f$ and $g_k$. Indeed, we have
$I(f,k)=E(f\, g_k)=\mathrm{cov}(f,g_k)+E(f)\, E(g_k)$, where $E(g_k)=\langle 1,g_k\rangle = I(1,k)=0$. From the usual interpretation of the
concept of covariance, we see that an element $\bfx\in\B^n$ makes a positive contribution to $I(f,k)$ whenever the values of $f(\bfx)-E(f)$ and
$g_k(\bfx)-E(g_k)=g_k(\bfx)$ have the same sign. Note that $g_k(\bfx)$ is positive whenever $x_{(k)}$ is greater than the value
$(d_{k+1}x_{(k+1)}+d_kx_{(k-1)})/(d_{k+1}+d_k)$, which lies in the range of $x_{(k)}$ when the other order statistics are fixed at $\bfx$.

\section{Two special cases: Cardinality index and system signature}

We now show that the \emph{cardinality index} and \emph{system signature} are particular instances of our influence index.

\subsection{The cardinality index of a fuzzy measure}

A fuzzy measure on the finite set $X=[n]$ is a nondecreasing set function $\mu\colon 2^X\to [0,1]$ satisfying the boundary conditions
$\mu(\varnothing)=0$ and $\mu(X)=1$. For any subset $S\subseteq X$, the number $\mu(S)$ can be interpreted as the certitude that we have that a
variable will take on its value in the set $S\subseteq X$. In this context, Yager \cite{Yag02} introduced the \emph{cardinality index}
associated with a fuzzy measure $\mu$ as the $n$-tuple $(C_0,\ldots,C_{n-1})$ where $C_k$ is the average gain in certitude that we obtain by adding
an arbitrary element to an arbitrary $k$-subset, that is,
$$
C_k=\frac{1}{(n-k){n\choose k}}~\sum_{|S|=k}~\sum_{x\notin S}\big(\mu(S\cup\{x\})-\mu(S)\big).
$$
We observe that this expression, which resembles the Banzhaf value (\ref{eq:sa8ffd}), could be used in cooperative game theory to measure the
marginal contribution of an additional player to a $k$-coalition. It is also clear that this index can be written as
$$
C_k=\frac{1}{{n\choose {k+1}}}\sum_{|S|=k+1}\mu(S)-\frac{1}{{n\choose k}}\sum_{|S|=k}\mu(S)\, ,
$$
which shows that we have $C_k=I(\mu,n-k)=I(\mu^d,k+1)$ in the special case when the weight function $w$ defining the index $I$ is symmetric.

\subsection{System signatures in engineering reliability}
\label{sec:2}

Consider a \emph{system} consisting of $n$ interconnected components. When the components have continuous and i.i.d.\ lifetimes
$X_1,\ldots,X_n$, the \emph{signature} of the system is defined as the $n$-tuple $(s_1,\ldots,s_n)\in [0,1]^n$ with $s_k=\Pr(T=X_{(k)})$, where
$T$ denotes the system lifetime. That is, $s_k$ is the probability that the $k$th failure causes the system to fail (for a recent reference, see
\cite{Sam07}). It was proved \cite{Bol01} that
$$
s_k=\frac{1}{{n\choose n-k+1}}\sum_{|\bfx|=n-k+1}\phi(\bfx)-\frac{1}{{n\choose n-k}}\sum_{|\bfx|=n-k}\phi(\bfx)\, ,
$$
where $\phi\colon\B^n\to\B$ is the structure function of the system. Thus, in view of Definitions~\ref{def:vf} and \ref{de:dfdgf78}, we have
$s_k=I(\phi,k)$ in the special case where the weight function $w$ is symmetric. Interestingly, the identity $s_k=I(\phi,k)$ still holds in the
non-i.i.d.\ case if we define the weight function $w$ as the (non-normalized) \emph{relative quality function}
$$
w(S)=\Pr\Big(\max_{i\in [n]\setminus S}X_i<\min_{j\in S}X_j\Big)
$$
for which we have $\sum_{|\bfx|=s}w(\bfx)=1$ for all $s\in [n]$ (see \cite{MarMat11}). Therefore $s_k$ can be obtained from a weighted least
squares approximation problem of the structure function and can always be interpreted as the influence on the system of the component that has
the $k$th smallest lifetime.

\section*{Acknowledgments}

This research was supported by the internal research project F1R-MTH-PUL-09MRDO of the University of Luxembourg.

\end{document}